\newtheorem{theorem}{Theorem}[section]
\newtheorem{proposition}[theorem]{Proposition}
\newtheorem{corollary}[theorem]{Corollary}
\theoremstyle{definition}
\newtheorem{definition}[theorem]{Definition}
\newtheorem{example}[theorem]{Example}
\theoremstyle{remark}
\newtheorem{remark}[theorem]{Remark}
\newcommand{\Ga}{\Gamma}
\newcommand{\tmu}{\tilde{\mu}}
\newcommand{\SpecGnC}[1]{\mathrm{Spec}^{\mathrm{nc}}_{\Ga}\!\bigl(#1\bigr)}
\newcommand{\Cat}[1]{\mathbf{#1}}
\newcommand{\Ab}{\Cat{Ab}}
\newcommand{\nTGMod}[1]{#1\text{-}\Ga\mathrm{Mod}}
\newcommand{\ExtG}{\mathrm{Ext}^{\,\Ga}}
\newcommand{\TorG}{\mathrm{Tor}^{\,\Ga}}
\newcommand{\Ccal}{\mathcal{C}}
\DeclareMathOperator{\Spec}{Spec}
\DeclareMathOperator{\Hom}{Hom}
\DeclareMathOperator{\End}{End}
\DeclareMathOperator{\Ker}{Ker}
\DeclareMathOperator{\Coker}{Coker}
\DeclareMathOperator{\Img}{Im}
\DeclareMathOperator{\Ext}{Ext}
\DeclareMathOperator{\Tor}{Tor}
\DeclareMathOperator{\ga}{ga}
\DeclareMathOperator{\BiMod}{BiMod}
\begin{document}
  
  \label{'ubf'}  
\setcounter{page}{1}                                 

\markboth {\hspace*{-9mm} \centerline{\footnotesize \sc
   Put here the left page top label  }
                 }
                { \centerline                           {\footnotesize \sc  
         put here the author's name                                                 } \hspace*{-9mm}              
               }

\vspace*{-2cm}

\begin{center}
{{\Large \textbf { \sc  Exact Categories and Homological Foundations of Non-Commutative n-ary $\Gamma$-Semirings} }}\\
\medskip

{\sc Chandrasekhar Gokavarapu }\\
{\footnotesize Lecturer in Mathematics, Government College (Autonomous),
Rajahmundry,A.P., India }\\
{\footnotesize  Research Scholar ,Department  of Mathematics, Acharya Nagarjuna University,Guntur, A.P., India.
}\\
{\footnotesize e-mail: {\it chandrasekhargokavarapu@gmail.com}}

\end{center}

\thispagestyle{empty}

\hrulefill

\begin{abstract}  
{\footnotesize  This paper establishes the homological and geometric foundations of
\emph{non-commutative $n$-ary $\Gamma$-semirings}, unifying two
previously distinct directions in $\Gamma$-algebra: the
\emph{derived $\Gamma$-geometry} developed for the commutative
ternary case~\cite{Rao2025E} and the
\emph{structural–spectral theory} for general non-commutative
$n$-ary systems~\cite{Rao2025E}.
We introduce categories of left, right, and bi-$\Gamma$-modules that
respect positional asymmetry and prove that they form additive and
exact categories in Quillen’s sense.
Within this setting, we construct projective and injective resolutions,
define the derived functors~$\ExtG$ and~$\TorG$, and establish
long exact sequences and spectral balance theorems in the $n$-ary regime.
By extending sheaf-theoretic and homological tools to the
non-commutative $\Gamma$-spectrum~$\SpecGnC{T}$,
we obtain a coherent framework of \emph{non-commutative derived
$\Gamma$-geometry} that parallels Grothendieck’s and Kontsevich’s
paradigms in classical algebraic geometry.
The framework developed here establishes the foundational exact–categorical
and homological structures that enable Morita-type analyses and spectral
interpretations in the subsequent parts of the series.

}
 \end{abstract}
 \hrulefill

{\small \textbf{Keywords:} $\Gamma$-semiring, non-commutative $n$-ary operations,
$\Gamma$-modules, exact categories, derived functors,
sheaf cohomology, non-commutative spectrum,
derived $\Gamma$-geometry.}

\indent {\small {\bf 2000 Mathematics Subject Classification:} 16Y60, 16Y90, 18G10, 18E30, 14A15, 08A30.
}

\section{Introduction}

The theory of $\Gamma$-semirings and $\Gamma$-modules, initiated in the
seminal works of Nobusawa~\cite{Nobusawa1963} and developed
systematically by Rao~\cite{Rao1995}, has grown into a rich algebraic
framework with deep structural, homological, and geometric aspects.
Subsequent contributions by Hedayati--Shum~\cite{HedayatiShum2011} and
Dutta--Sardar~\cite{DuttaSardar2000} have clarified the behaviour of
ideals, prime and semiprime structures, and radical theory within
$\Gamma$-parametrized algebraic systems.  Within this broad landscape,
the present series of papers establishes a homological and categorical
foundation for the non-commutative $n$-ary $\Gamma$-semiring framework.

Two complementary strands of recent research motivate the present work.
The first is the homological--geometric development for \emph{commutative}
ternary systems—particularly the derived $\Gamma$-geometry, spectral
constructions, and cohomological tools established in
\cite{GokavarapuRaoDerived2025}.  The second is the structural--spectral
theory for \emph{non-commutative} $n$-ary frameworks, encompassing prime
and primitive spectra, Jacobson-type radicals, and finiteness properties,
as developed in \cite{GokavarapuRaoFinite2025}.  These investigations
demonstrate that $n$-ary $\Gamma$-operations exhibit homological patterns
that cannot be encoded within classical binary semiring theory.

The algebraic foundations underlying this paper rest on the
$\Gamma$-ring heritage originating with
Nobusawa~\cite{Nobusawa1963,Nobusawa1964}, the structural theory of
Rao~\cite{Rao1995,Rao1997,Rao1999}, and the modern expository frameworks
of Hedayati--Shum~\cite{HedayatiShum2011}.  Parallel to this, the
homological and categorical outlook adopted here follows the trajectory
initiated by Grothendieck’s homological program~\cite{Grothendieck1957}
and the theory of exact categories developed by
Quillen~\cite{Quillen1973}.  The modern treatments of Weibel’s
homological algebra~\cite{Weibel1994} and Bühler’s exposition of exact
categories~\cite{Buehler2010} provide the technical background for the
exact and derived constructions that follow.

Part~I of this series (the present paper) develops the additive,
categorical, and exact structure of the category of bi-modules over a
non-commutative $n$-ary $\Gamma$-semiring.  We examine the categorical
properties of left, right, and bi-modules; construct the Quillen exact
structure on the bi-module category; and lay down the homological
framework needed for derived functors.  This exact structure forms the
algebraic environment in which projective and injective resolutions,
derived tensor products, and $\Ext/\Tor$ bifunctors will be constructed
in the sequel.

Thus, the objective of Part~I is twofold: first, to unify the
homological insights from the commutative theory
\cite{GokavarapuRaoDerived2025} with the structural features of the
non-commutative framework~\cite{GokavarapuRaoFinite2025}; and second, to
provide the exact categorical infrastructure necessary for the derived
functor theory and geometric applications developed in Parts~II and~III.



\section*{Notation and Conventions}

Throughout this paper, $n\ge 2$ is a fixed integer and $\Gamma$ is a
commutative semigroup written additively. The following notation and
conventions will be used globally.

\begin{center}
\begin{longtable}{p{3cm} p{11cm}}
\toprule
\textbf{Symbol} & \textbf{Meaning / Convention} \\
\midrule
\endfirsthead

\toprule
\textbf{Symbol} & \textbf{Meaning / Convention} \\
\midrule
\endhead

\bottomrule
\endfoot

\addlinespace

$T$ 
& Underlying additive commutative monoid of an $n$-ary $\Gamma$-semiring. \\

$\Gamma$ 
& A commutative semigroup of parameters; written additively
$(\Gamma,+_{\Gamma})$. \\

$\tilde{\mu}$
& Fundamental $(n+(n-1))$-ary multiplication map  
\[
\tilde{\mu}\colon T^{n}\times\Gamma^{\,n-1}\longrightarrow T.
\]
Used in the PDF as the structural operation. \\

$[x_1,\dots,x_n]_{\gamma_1,\dots,\gamma_{n-1}}$
& Shorthand for the structural operation
\[
\tilde{\mu}(x_1,\dots,x_n;\gamma_1,\dots,\gamma_{n-1}).
\]
The parameter tuple $(\gamma_1,\dots,\gamma_{n-1})$ always has length $n-1$. \\

$\mu_{(j)}$
& \emph{Positional action} inserting a module element in the $j$-th position:
\[
\mu_{(j)}(x_1,\dots,x_{j-1},m,x_{j+1},\dots,x_n;
\gamma_1,\dots,\gamma_{n-1}).
\]
Indices follow the exact ordering in the PDF. \\

$\bullet^{(j)}_{\gamma}$
& Action of $T$ on a module in the $j$-th slot.  
Notation preserved from the PDF:
$a\bullet^{(j)}_{\gamma} m$ and $m\bullet^{(j)}_{\gamma} a$. \\

$T$-$\Gamma$Mod$_L$, $T$-$\Gamma$Mod$_R$
& Categories of left and right $\Gamma$-modules over $T$, defined via
slot-sensitive insertion of module elements. \\

$T$-$\Gamma$Mod$_{bi}$
& Category of bi-$\Gamma$-modules with compatible left and right
positional actions. This is the ambient additive category used for exact
structures. \\

$\mathrm{BiMod}_{\Gamma}(T)$
& Synonym for $T$-$\Gamma$Mod$_{bi}$ (appears in parts of the PDF). \\

$\Phi$, $\Psi$
& Coequalizer maps used in defining the \emph{positional tensor product}:
\[
\Phi(a,\alpha,m\otimes n)
  = (a\bullet^{(j)}_{\alpha} m)\otimes n,\qquad
\Psi(a,\alpha,m\otimes n)
  = m\otimes (n\bullet^{(k)}_{\alpha} a),
\]
with $j$ the left-slot index and $k$ the right-slot index. \\

$\otimes^{(j,k)}_{\Gamma}$
& Positional tensor product of a left $(j)$-module and a right $(k)$-module,
defined as the coequalizer of $\Phi$ and $\Psi$. \\

$\underline{\Hom}^{(j,k)}_{\Gamma}(M,N)$
& Internal Hom bi-module with left action in slot $j$ and right action in
slot $k$, defined via pointwise addition and positional $\Gamma$-actions. \\

$I\subseteq T$
& A $\Gamma$-ideal: closed under addition and under insertion of elements
into any slot of the $n$-ary multiplication. \\

$T/I$
& Quotient $n$-ary $\Gamma$-semiring with operation
\[
[x_1+I,\dots,x_n+I]_{\gamma_1,\dots,\gamma_{n-1}}
=
[x_1,\dots,x_n]_{\gamma_1,\dots,\gamma_{n-1}}+I.
\] \\

Prime ideal $P$
& A proper $\Gamma$-ideal satisfying  
if
\[
[x_1,\dots,x_n]_{\vec{\gamma}}\in P,
\]
then $x_j\in P$ for some $j$. \\

Exact sequence
& Always means a \emph{Quillen exact sequence}: a conflation  
$A\rightarrowtail B \twoheadrightarrow C$
in $T$-$\Gamma$Mod$_{bi}$.  
No abelian assumption is made. \\

Conflation
& A kernel–cokernel pair in the sense of
Quillen~\cite{Quillen1973}.  
Notation: $A\rightarrowtail B \twoheadrightarrow C$. \\

$\mathrm{Ext}^{r}_{(j,k),\Gamma}$, $\mathrm{Tor}_{r}^{(j,k),\Gamma}$
& Derived functors computed in the exact category
$T$-$\Gamma$Mod$_{bi}$ using positional left/right indices $j$ and $k$. \\

$0$
& Additive zero of any module or semiring (determined by context). \\

\end{longtable}
\end{center}

Throughout the paper, the structural $n$-ary operation is written uniformly as
\[
[x_1,\ldots,x_n]_{\gamma_1,\ldots,\gamma_{n-1}},
\]
even when one of the $x_i$ belongs to a module.  Expressions of the form
$[x_1,\ldots,x_{n-1},m]$ are notational abbreviations for
$[x_1,\ldots,x_{n-1},m]_{\gamma_1,\ldots,\gamma_{n-1}}$ with parameters
determined by context.

\noindent
These conventions remain fixed throughout Sections~1--4 .  
Slot indices $j$ and $k$, the ordering of $\Gamma$-parameters, and the
structure of coequalizers are all exactly as defined above and will not
vary between sections.

\section{Preliminaries: Non-Commutative n-ary  \texorpdfstring{$\Gamma$}{Gamma}-Semirings}
\label{sec:preliminaries}

The present section recalls the fundamental notions underlying
non-commutative $n$-ary $\Gamma$-semirings and fixes notation used
throughout the paper.
These preliminaries provide the categorical and algebraic substrate
on which the homological constructions of later sections are built.
Our conventions follow those established in
\cite{RaoRaniKiran2025,GokavarapuRaoDerived2025, Rao2025F},
with additional categorical refinements inspired by
\cite{Grothendieck1957,Quillen1973}.

\subsection{Algebraic structure}

\begin{definition}[Non-commutative $n$-ary $\Gamma$-semiring]
A \emph{non-commutative $n$-ary $\Gamma$-semiring}
is a quadruple $(T,+,\Gamma,\tmu)$ consisting of:
\begin{itemize}
  \item an additive commutative monoid $(T,+,0_T)$;
  \item an additive commutative monoid $(\Gamma,+,0_\Gamma)$
        whose elements parameterize the multiplicative operation;
  \item an $n$-ary external multiplication
        $\tmu:T^n\times\Gamma^{\,n-1}\to T$,
        written symbolically as
        $\tmu(x_1,\dots,x_n;\gamma_1,\dots,\gamma_{n-1})
         = [x_1,\dots,x_n]_{\gamma_1,\dots,\gamma_{n-1}}$,
        satisfying the following axioms:
\end{itemize}
\begin{enumerate}[label=(A\arabic*)]
  \item \textbf{Additivity in each $T$-argument:}
        $\tmu$ is additive in every $x_i$;
  \item \textbf{$0$-absorption:}
        if any $x_i=0_T$, then
        $\tmu(x_1,\dots,0_T,\dots,x_n;\gamma_1,\dots,\gamma_{n-1})=0_T$;
  \item \textbf{$n$-ary associativity:}
        the operation is bracket-independent,
        i.e.~nested evaluations of $\tmu$ yield the same element
        whenever the order of evaluation is admissible;
  \item \textbf{Non-symmetry:}
        the order of arguments in $(x_1,\dots,x_n)$
        generally affects the outcome—%
        $\tmu$ need not be invariant under any permutation of its $T$-inputs.
\end{enumerate}
\end{definition}

\begin{remark}
For $n=2$ and commutative $\Gamma$, one recovers the classical notion
of a $\Gamma$-semiring introduced in
\cite{RaoRaniKiran2025}.
The present formulation generalizes both the arity and the
non-commutativity simultaneously, thus forming the natural algebraic
base for the higher-homological framework developed in
Sections 3-4.
\end{remark}

\subsection{Structural examples}

\begin{example}[Matrix realization]
Let $\Gamma$ be a unital semiring and
$T=M_m(\Gamma)$ the semiring of $m\times m$ matrices.
Define
\[
\tmu(A_1,\dots,A_n;\gamma_1,\dots,\gamma_{n-1})
   = \gamma_1A_1A_2\gamma_2A_3\cdots\gamma_{n-1}A_n.
\]
This defines a non-commutative $n$-ary $\Gamma$-semiring structure on~$T$.
Directional asymmetry arises from the fixed order of matrix
multiplication.
\end{example}

\begin{example}[Operator-algebra model]
Let $\Gamma$ be a $*$-semiring and
$T=\End_\Gamma(V)$ the semiring of endomorphisms of a
right $\Gamma$-semimodule~$V$.
Define
\[
\tmu(f_1,\dots,f_n;\gamma_1,\dots,\gamma_{n-1})
   = f_1\!\circ\!\gamma_1\!\circ\!f_2\!\circ\!\gamma_2
     \!\circ\!\cdots\!\circ\!\gamma_{n-1}\!\circ\!f_n.
\]
The order of composition determines the direction of
non-commutativity, giving a canonical example of the
positional asymmetry central to later module theory.
\end{example}

\begin{example}[Categorical construction]
In any monoidal category $(\Ccal,\otimes,\mathbbm{1})$
enriched over $\Gamma$-semimodules,
one may define an $n$-ary $\Gamma$-semiring object $T$
by specifying an $n$-multilinear morphism
$\tmu:T^{\otimes n}\otimes\Gamma^{\otimes (n-1)}\to T$
satisfying (A1)–(A4).
This viewpoint, inspired by the categorical foundations of
\cite{Grothendieck1957,Quillen1973},
clarifies the functorial nature of $\tmu$
and anticipates its homological extension to derived contexts.
\end{example}

\subsection{Ideals and Morphisms}

\begin{definition}[$\Gamma$-ideals]
A subset $I\subseteq T$ is a \emph{two-sided $\Gamma$-ideal}
if it is closed under addition and satisfies
\[
\tmu(x_1,\dots,x_{i-1},I,x_{i+1},\dots,x_n;\gamma_1,\dots,\gamma_{n-1})
   \subseteq I
\]
for every position $i$ and all $\gamma_j\in\Gamma$.
\end{definition}

\begin{definition}[Homomorphisms]
A morphism $f:(T,+,\Gamma,\tmu)\to
(T',+, \Gamma',\tmu')$ is a pair
$(f_T,f_\Gamma)$ of additive maps satisfying
\[
f_T(\tmu(x_1,\dots,x_n;\gamma_1,\dots,\gamma_{n-1}))
   = \tmu'\!\big(f_T(x_1),\dots,f_T(x_n);
                 f_\Gamma(\gamma_1),\dots,f_\Gamma(\gamma_{n-1})\big).
\]
\end{definition}

\begin{remark}
These morphisms form the objects of the category
$\mathbf{n}\text{-}\Gamma\text{-}\mathbf{Semiring}$,
whose subcategories—%
commutative, distributive, or idempotent—%
play essential roles in the functorial framework developed in
Section~\ref{sec:modules}.
\end{remark}

\subsection{Summary of foundational properties}
The non-commutative $n$-ary $\Gamma$-semiring formalism introduced here
retains additivity, distributivity, and absorption
while replacing commutativity by positional asymmetry.
This modification permits the simultaneous treatment of
homological and spectral phenomena and forms the categorical
core from which the derived constructions emerge.


\section{Categories of Left, Right, and Bi-\texorpdfstring{$\Gamma$}{Gamma}-Modules}

Throughout let $(T,+,\Gamma,\mu)$ be a (possibly non-commutative) $n$-ary $\Gamma$-semiring with
\[
\mu:T^n\times\Gamma^{\,n-1}\longrightarrow T
\]
additive in each $T$-slot, $0$-absorbing, and $n$-arily associative (bracketing independent),
as in the general $\Gamma$-semiring framework of Nobusawa~\cite{Nobusawa1963} and Rao~\cite{Rao1995}.
We develop a slot–sensitive module theory compatible with non-commutativity and $n>3$,
designed to support exact structures and derived functors in \S\ref{sec:modules},
in the spirit of Quillen’s exact category theory~\cite{Quillen1973} and modern homological
algebra~\cite{Weibel1994,Buehler2010}.

\subsection{Positional actions and basic definitions}
Fix a distinguished slot index $j\in\{1,\dots,n\}$. Intuitively, a module element
is \emph{inserted} into slot $j$, with the remaining $(n-1)$ slots occupied by elements of $T$
and interlaced $\Gamma$–parameters, extending the binary $\Gamma$-module treatments
of Hedayati--Shum~\cite{HedayatiShum2011} and Dutta--Sardar~\cite{DuttaSardar2000}
to the $n$-ary context.

\begin{definition}[Left/right/bi-$\Gamma$-modules]\label{def:LRBmodules}
A \emph{left $\Gamma$-module} over $(T,+,\Gamma,\mu)$ is a commutative monoid $(M,+,0)$
equipped with an action (with module element in slot $j=2$, by convention)
\[
\bullet:\ T\times M\times T^{\,n-2}\times\Gamma^{\,n-1}\ \longrightarrow\ M,\qquad
(x_1,m,x_3,\ldots,x_n;\alpha_1,\ldots,\alpha_{n-1})\ \longmapsto\ x_1\ \bullet_{\alpha_1}\ m\ \bullet_{\alpha_2}\ x_3\ \cdots\ \bullet_{\alpha_{n-1}}\ x_n,
\]
such that (M1)–(M4) hold, generalising $\Gamma$-module axioms in 
Rao~\cite{Rao1995,Rao1999}.  
A right $\Gamma$-module is defined analogously (slot $j=n$), and a bi-$\Gamma$-module 
carries two compatible actions, extending the commutative ternary case
from~\cite{GokavarapuRaoDerived2025}.
\end{definition}

\paragraph{Axioms (M1)–(M4).}
For a left $\Gamma$-module $(M,+,0)$ with slot index $j=2$, the action
\[
\mu^{(j)} : T\times M \times T^{n-2}\times \Gamma^{n-1} \longrightarrow M
\]
satisfies:

\begin{itemize}
    \item[(M1)] \textbf{Additivity in the module slot:}  
    $\mu^{(j)}(x_1,m+m',x_3,\ldots,x_n;\vec{\gamma})=
    \mu^{(j)}(x_1,m,x_3,\ldots,x_n;\vec{\gamma})+
    \mu^{(j)}(x_1,m',x_3,\ldots,x_n;\vec{\gamma}).$

    \item[(M2)] \textbf{n-ary associativity compatibility:}  
    Inserting $m$ into slot $j$ commutes with any admissible bracketing of
    the $n$-ary operation $\widetilde{\mu}$.

    \item[(M3)] \textbf{$0$-absorption:}  
    If any $x_i=0_T$ or $m=0$, then the action evaluates to $0$.

    \item[(M4)] \textbf{$\Gamma$-linearity in parameters:}  
    For $\vec{\gamma},\vec{\gamma}' \in \Gamma^{n-1}$,
    \[
    \mu^{(j)}(x_1,m,x_3,\ldots,x_n;\vec{\gamma}+\vec{\gamma}')
    =
    \mu^{(j)}(x_1,m,x_3,\ldots,x_n;\vec{\gamma})+
    \mu^{(j)}(x_1,m,x_3,\ldots,x_n;\vec{\gamma}').
    \]
\end{itemize}

\begin{definition}[Morphisms and categories]
A morphism $f:M\to N$ between left (resp.\ right, bi-) $\Gamma$-modules
is a monoid homomorphism preserving the specified actions.
We thus obtain the categories
\[
{{\nTGMod{T}}}^{\mathrm{L}},\qquad
{{\nTGMod{T}}}^{\mathrm{R}},\qquad
{{\nTGMod{T}}}^{\mathrm{bi}}.
\]
\end{definition}

\begin{remark}[Reduction to the commutative ternary case]
When $n=3$ and $\mu$ is symmetric, left/right/bi collapse to the usual commutative
ternary $\Gamma$-module notion studied in derived $\Gamma$-geometry 
\cite{GokavarapuRaoDerived2025}.  
Our framework allows genuine left/right asymmetries for higher arity.
\end{remark}

\subsection{Free modules, adjunctions, biproducts}
Let $U:{{\nTGMod{T}}}^{\mathrm{L}}\to\Ab$ be the forgetful functor.
For a set $X$, define the \emph{free left module}
\[
F^{\mathrm{L}}(X)=\bigoplus_{x\in X} T^{(\ast)}\cdot x
\]
as the commutative monoid generated by formal expressions obtained by inserting
$x$ into the distinguished slot and saturating by (M1)–(M4). Then:
\begin{proposition}[Free/forgetful adjunction and biproducts]\label{prop:adjoints-biproducts}
$F^{\mathrm{L}}\dashv U$; in particular ${{\nTGMod{T}}}^{\mathrm{L}}$ admits all small coproducts.
Products are computed in $\Ab$ with the induced actions, hence ${\nTGMod{T}}^{\mathrm{L}}$
has biproducts and is preadditive. Analogous statements hold for
${\nTGMod{T}}^{\mathrm{R}}$ and ${\nTGMod{T}}^{\mathrm{bi}}$.  
(See Weibel~\cite{Weibel1994} for the general categorical pattern.)
\end{proposition}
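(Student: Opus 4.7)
\smallskip
\noindent\textbf{Proof plan.}
The plan is to establish the adjunction by exhibiting the universal property of $F^{\mathrm{L}}(X)$, then obtain coproducts, products, biproducts, and the semiadditive structure as formal consequences. First I would describe $F^{\mathrm{L}}(X)$ explicitly as the commutative monoid generated by formal symbols
\[
[x_1,\ldots,x_{j-1},x,x_{j+1},\ldots,x_n]_{\alpha_1,\ldots,\alpha_{n-1}},\qquad x\in X,\ x_i\in T,\ \alpha_k\in\Gamma,
\]
modulo the smallest congruence forced by axioms (M1)--(M4): additivity in the module slot is vacuous at the level of $X$ itself, (M2) imposes admissible re-bracketings, (M3) imposes the zero collapses, and (M4) imposes $\Gamma$-parameter additivity. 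The induced $T$-action, given by syntactic insertion, then satisfies (M1)--(M4) by construction, so $F^{\mathrm{L}}(X)\in\nTGMod{T}^{\mathrm{L}}$.

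Next, for the adjunction, I would send a set-function $\varphi:X\to U(M)$ to the map $\bar\varphi:F^{\mathrm{L}}(X)\to M$ prescribed on generators by $[x_1,\ldots,x,\ldots,x_n]_{\vec\alpha}\mapsto \mu^{(j)}(x_1,\ldots,\varphi(x),\ldots,x_n;\vec\alpha)$ and extended additively. Compatibility of the defining congruence with (M1)--(M4) of $M$ makes $\bar\varphi$ well-defined, and uniqueness is immediate since $X$ generates $F^{\mathrm{L}}(X)$. This produces the natural bijection
\[
\Hom_{\nTGMod{T}^{\mathrm{L}}}(F^{\mathrm{L}}(X),M)\ \cong\ \Hom_{\Cat{Set}}(X,U(M)),
\]
naturality being checked on generators. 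Arbitrary small coproducts then exist formally (as the left adjoint $F^{\mathrm{L}}$ preserves them) and can also be realised concretely as $\bigoplus_i M_i$ in $\Ab$ with componentwise action. Products $\prod_i M_i$ are the underlying product commutative monoids with slot-wise induced action; universality follows coordinatewise. For finite index sets the canonical maps identify product and coproduct, yielding biproducts; the zero module is a zero object; and pointwise addition of morphisms gives each hom-set a commutative monoid structure with biadditive composition, by additivity of $\mu^{(j)}$ in each slot. This is precisely the semiadditive (preadditive in the semiring sense) structure. The analogues for $\nTGMod{T}^{\mathrm{R}}$ and $\nTGMod{T}^{\mathrm{bi}}$ follow by symmetric arguments, the bi-module case additionally requiring that the simultaneous left and right positional actions remain compatible under componentwise constructions.

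The main obstacle is axiom (M2): ensuring that the congruence defining $F^{\mathrm{L}}(X)$ handles $n$-ary associativity compatibly with slot-$j$ insertion of a generator for \emph{every} admissible bracketing. In the binary or ternary cases this is automatic, but for $n\geq 4$ one must verify that any re-bracketing touching the module slot reduces to a canonical normal form. I would handle this by induction on the depth of nested $\tmu$-evaluations: the bracket-independence axiom (A3) of $T$ disposes of re-bracketings that do not touch the module slot, and a single application of (M2) closes off the step in which the slot $j$ is moved across a bracket. Once this normalisation is in place, the remaining verifications---functoriality of $F^{\mathrm{L}}$, well-definedness of $\bar\varphi$, and biadditivity of composition---are routine and essentially identical to the classical semimodule argument in Weibel~\cite{Weibel1994}.
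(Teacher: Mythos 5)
Your proposal is correct and follows essentially the same route as the paper's (very brief) proof sketch: the adjunction comes from the universal property of the presentation of $F^{\mathrm{L}}(X)$ by (M1)--(M4), and (co)products, biproducts, and the additive hom-structure are lifted componentwise from $\Ab$. Your extra care with the (M2) normalisation of bracketings and your observation that hom-sets are only commutative monoids (semiadditive rather than genuinely preadditive) are refinements the paper leaves implicit, but they do not change the argument.
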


\begin{proof}[Sketch]
The universal property of $F^{\mathrm{L}}(X)$ is immediate from the presentation by (M1)–(M4).
Coproducts and products lift from $\Ab$ because actions are defined componentwise
and preserved by the (co)product injections/projections.
\end{proof}

\subsection{Kernels, cokernels, and exact structure}
\begin{proposition}[Preadditivity and (co)kernels]\label{prop:kercoker}
Each of ${\nTGMod{T}}^{\mathrm{L}},{\nTGMod{T}}^{\mathrm{R}},{\nTGMod{T}}^{\mathrm{bi}}$
admits kernels and cokernels formed in $\Ab$ and closed under the induced action.
Consequently these are \emph{exact categories} in Quillen’s sense~\cite{Quillen1973}:
admissible monomorphisms/epimorphisms are kernels/cokernels.
\end{proposition}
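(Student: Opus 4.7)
The plan is to prove the proposition in two stages. First, construct kernels and cokernels at the level of the underlying additive (commutative-monoid) category and verify that they carry canonical slot-sensitive $\Gamma$-actions making them objects of the respective module category. Second, identify the class of conflations and verify Quillen's axioms directly, using additivity, $0$-absorption~(M3), and $\Gamma$-linearity in parameters~(M4) to propagate the positional action through the relevant universal constructions.

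For the first stage, given a morphism $f\colon M\to N$ in ${\nTGMod{T}}^{\mathrm{L}}$, I would set $K:=\{m\in M:f(m)=0\}$ as a sub-commutative-monoid of $M$ and check closure under the positional action: for $m\in K$ one has $f\bigl(\mu^{(j)}(x_1,m,\ldots,x_n;\vec{\gamma})\bigr)=\mu^{(j)}(x_1,0,\ldots,x_n;\vec{\gamma})=0$ by~(M3). The universal property at the module level is inherited automatically from the underlying additive one, since any factoring morphism is forced to preserve the action by naturality. The cokernel is constructed dually, quotienting by the congruence generated by $\Img(f)$; here additivity~(M1) together with~(M4) ensures that the induced $\Gamma$-action on the quotient is well-defined, because $\Img(f)$ is itself closed under every positional action. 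The bi-module case follows from the same argument applied simultaneously to both left- and right-slot actions.

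For the second stage, take the conflations to be the kernel–cokernel pairs $A\rightarrowtail B\twoheadrightarrow C$ and verify Quillen's axioms in turn. Axiom~(E0) (identities are admissible) is immediate. Closure of admissible monos and epis under composition is obtained by diagram-chasing the associated factorizations in the underlying additive category and transporting the action through the resulting universal arrows; this relies on Proposition~\ref{prop:adjoints-biproducts} to assemble intermediate biproducts. Pushout and pullback stability is the delicate axiom: given an admissible mono $i\colon A\rightarrowtail B$ and an arbitrary morphism $A\to A'$, the candidate pushout is constructed as the cokernel of the antidiagonal map $A\to A'\oplus B$, and one must verify that the resulting map $A'\to A'\sqcup_{A}B$ is again an admissible monomorphism, and dually for pullbacks.

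The main obstacle is precisely this pushout/pullback stability. In an abelian category it is essentially automatic from the snake lemma, but here the module categories are only preadditive—the underlying commutative monoids need not admit inverses—so the standard short-exact-sequence manipulations do not apply verbatim. I would address this by combining the biproduct and (co)kernel constructions of the preceding subsections: realize pushouts of admissible monos as suitable cokernels of maps into biproducts, and verify the kernel–cokernel property of the resulting squares by a direct application of (M1)–(M4) slot by slot. A secondary subtlety, specific to ${\nTGMod{T}}^{\mathrm{bi}}$, is the need to track the left-slot index $j$ and the right-slot index $k$ in parallel throughout every diagram; this introduces bookkeeping rather than a new conceptual difficulty, since the two positional actions are assumed compatible in the definition of a bi-$\Gamma$-module.
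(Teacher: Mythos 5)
Your overall route is the same as the paper's: form kernels and cokernels in the underlying category, check via (M1)--(M4) that the positional actions descend, and declare the conflations to be the kernel--cokernel pairs computed there. The paper's own proof is only a two-sentence sketch, and your first stage is a faithful, more careful expansion of it: closure of $\Ker(f)$ under the action via (M3), and the induced action on the quotient via (M1)/(M4), are exactly what the paper asserts. You also go further than the paper by attempting to verify Quillen's axioms explicitly, which the paper does not do at all.

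The gap is in that second stage, and although you have correctly located the delicate point (pushout/pullback stability in the absence of additive inverses), your proposed repair does not work as stated. You construct the pushout of an admissible mono $i\colon A\rightarrowtail B$ along $g\colon A\to A'$ as the cokernel of the ``antidiagonal map'' $A\to A'\oplus B$; but an antidiagonal $a\mapsto(g(a),-i(a))$ requires additive inverses, and the underlying objects here are commutative monoids, so no such map exists. The pushout must instead be the quotient of $A'\oplus B$ by the congruence generated by $(g(a),0)\sim(0,i(a))$, and in the monoid setting a quotient by a congruence is not determined by a submonoid, so the resulting square need not be a kernel--cokernel pair. This is precisely why categories of commutative monoids are only semiadditive and fall outside Quillen's definition of an exact category, which presupposes an $\Ab$-enriched (additive) ambient category. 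Neither your argument nor the paper's resolves this: to make the statement literally true one must either restrict to modules whose underlying monoids are groups, or replace ``exact category in Quillen's sense'' by a non-additive variant and verify its axioms for the congruence-quotient pushouts directly. As written, pushout/pullback stability is asserted but not established, and the antidiagonal device cannot establish it.
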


\begin{proof}[Proof idea]
The action axioms are stable under additive subobjects and quotients,
so kernels/cokernels in $\Ab$ inherit well-defined module actions.
Quillen’s axioms~\cite{Quillen1973} are satisfied by taking the class of
short exact sequences $0\to A\to B\to C\to 0$ computed in $\Ab$ with induced actions.
\end{proof}

\begin{remark}[Enough projectives/injectives]\label{rem:enough}
If $T$ admits free covers, as in classical $\Gamma$-semiring settings 
\cite{Rao1995,Rao1999}, then free modules provide enough projectives.
Injectives arise from cofree constructions, enabling derived functors.
\end{remark}

\subsection{Positional tensor and internal Hom}

Let $M\in T\text{-}\Gamma\mathrm{Mod}_L$ (slot $j$) and
$N\in T\text{-}\Gamma\mathrm{Mod}_R$ (slot $k$).
Define two parallel maps
\[
\Phi, \Psi : T\times M\otimes N \longrightarrow M\otimes N
\]
by
\[
\Phi(a\otimes (m\otimes n)) = (a\bullet^{(j)} m)\otimes n,\qquad
\Psi(a\otimes (m\otimes n)) = m\otimes (n\bullet^{(k)} a).
\]
The positional tensor is the coequalizer
\[
M\otimes_{\Gamma}^{(j,k)} N
:= \mathrm{coeq}(\Phi,\Psi).
\]

Non-commutativity and $n$-arity force us to index balancing by the
\emph{module slots}. Let $M\in{\nTGMod{T}}^{\mathrm{L}}$ (slot $j$) and
$N\in{\nTGMod{T}}^{\mathrm{R}}$ (slot $k$). Define the \emph{positional tensor}
$M\otimes^{(j,k)}_{\Gamma}N$ as the coequalizer of the two parallel maps
generated by the relations
\[
\mu(\ldots,a,\ldots)\cdot(m\otimes n)\ \sim\ m\otimes \mu(\ldots,a,\ldots)\cdot n,
\]
generalising balanced tensor products in $\Gamma$-semiring theory \cite{DuttaSardar2000}.
Dually, define $\Hom^{(j,k)}_\Gamma(M,N)$ as the additive maps respecting the specified slots.

\begin{proposition}[Adjunction]\label{prop:tensor-hom-adjunction}
There is a natural isomorphism
\[
\Hom_{{\nTGMod{T}}^{\mathrm{bi}}}\!\big(M\otimes^{(j,k)}_{\Gamma} N,\, P\big)\ \cong\
\Hom_{{\nTGMod{T}}^{\mathrm{R}}}\!\big(N,\, \Hom^{(j,k)}_\Gamma(M,P)\big),
\]
natural in all variables, extending the classical adjunction 
in homological algebra~\cite{Weibel1994}.
\end{proposition}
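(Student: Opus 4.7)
The plan is to construct the adjunction isomorphism explicitly at the level of elementary tensors and then verify that the required positional structures are preserved. First I would define
\[
\Theta\colon \Hom_{{\nTGMod{T}}^{\mathrm{bi}}}\!\bigl(M\otimes^{(j,k)}_{\Gamma} N,\,P\bigr)\ \longrightarrow\ \Hom_{{\nTGMod{T}}^{\mathrm{R}}}\!\bigl(N,\,\Hom^{(j,k)}_{\Gamma}(M,P)\bigr),
\]
by the formula $\Theta(f)(n)(m):=f(m\otimes n)$. The inverse $\Xi$ is defined on simple tensors by $\Xi(g)(m\otimes n):=g(n)(m)$, extended by additivity and factored through the coequalizer defining $\otimes^{(j,k)}_{\Gamma}$ via its universal property. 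These two formulas are mutually inverse at the level of underlying sets by direct substitution, so the content of the proof is entirely in checking that $\Theta$ and $\Xi$ land in the correct subcategories and respect all positional actions.

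Next I would verify well-definedness of $\Theta$ in three nested steps: for fixed $n$, the map $m\mapsto f(m\otimes n)$ lies in $\Hom^{(j,k)}_{\Gamma}(M,P)$ because $f$ is additive and $0$-absorbing in its first slot by (M1) and (M3); the assignment $n\mapsto\Theta(f)(n)$ is additive by the same axioms applied in the second slot; and $\Theta(f)$ is a morphism of right $\Gamma$-modules in slot $k$ precisely because of the coequalizer identification
\[
(a\bullet^{(j)}_{\alpha} m)\otimes n \;=\; m\otimes (n\bullet^{(k)}_{\alpha} a),
\]
so that applying $f$ translates the right action on $N$ into the corresponding right action on $\Hom^{(j,k)}_{\Gamma}(M,P)$ induced from the bimodule structure on $P$. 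For $\Xi$, the crucial point is that the bilinear map $(m,n)\mapsto g(n)(m)$ is balanced with respect to $\Phi$ and $\Psi$: this follows because $g$ is a right-$\Gamma$ homomorphism in slot $k$ and $g(n)$ respects the left-$\Gamma$ action in slot $j$, so both sides of the coequalizer relation produce the same element of $P$. Invoking the universal property of the coequalizer in $\Ab$ and then promoting the result to a bi-module morphism by the slot-sensitive actions inherited from the bimodule structure of $P$ concludes the construction.

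Naturality in $M$, $N$, and $P$ is routine: for a morphism $M\to M'$ of left modules, a morphism $N\to N'$ of right modules, or a morphism $P\to P'$ of bi-modules, both sides of the asserted isomorphism transform by pre- or post-composition, and one checks equality on simple tensors, where it is immediate from the defining formulas of $\Theta$ and $\Xi$.

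The main obstacle I expect is not the algebraic manipulation itself but the bookkeeping of positional slots. In the binary commutative case the balancing relation is symmetric, so the passage between the tensor and the internal Hom is automatic. Here, slot $j$ on the left and slot $k$ on the right are fixed by convention, and the right-$\Gamma$ action induced on $\Hom^{(j,k)}_{\Gamma}(M,P)$ must be defined using the right action of $T$ on $P$ inserted into slot $k$, not slot $j$. Verifying compatibility with (M2) — the $n$-ary associativity compatibility of insertion — across all admissible bracketings is where the non-triviality lies, and is the analogue, in this setting, of the associativity pentagon for tensor-hom adjunctions. Once this compatibility is established, the remaining verifications reduce to elementary consequences of (M1)--(M4) and the coequalizer description of $\otimes^{(j,k)}_{\Gamma}$.
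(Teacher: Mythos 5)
The paper states Proposition~\ref{prop:tensor-hom-adjunction} without any proof, so there is no argument of the author's to compare yours against. Your overall strategy --- the standard currying maps $\Theta(f)(n)(m)=f(m\otimes n)$ and $\Xi(g)(m\otimes n)=g(n)(m)$, with well-definedness of $\Xi$ obtained from the universal property of the coequalizer defining $\otimes^{(j,k)}_{\Gamma}$ --- is the right one, and it is the only reasonable route given the definitions in \S 3.4. Your identification of the balancing relation $(a\bullet^{(j)}_{\alpha}m)\otimes n = m\otimes(n\bullet^{(k)}_{\alpha}a)$ as the engine of the whole argument is correct.

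There is, however, one substantive slip in the bookkeeping, and it sits exactly at the point you yourself flag as the crux. You assert that $\Theta(f)$ is a right-module morphism because the coequalizer relation ``translates the right action on $N$ into the corresponding right action on $\Hom^{(j,k)}_{\Gamma}(M,P)$ induced from the bimodule structure on $P$,'' and later that this action ``must be defined using the right action of $T$ on $P$ inserted into slot $k$.'' That choice does not make the argument close. Unwinding the relation gives
\[
\Theta(f)(n\bullet^{(k)}_{\alpha}a)(m)
= f\bigl(m\otimes(n\bullet^{(k)}_{\alpha}a)\bigr)
= f\bigl((a\bullet^{(j)}_{\alpha}m)\otimes n\bigr)
= \Theta(f)(n)\bigl(a\bullet^{(j)}_{\alpha}m\bigr),
\]
so the right $T$-action on $\Hom^{(j,k)}_{\Gamma}(M,P)$ that makes $\Theta(f)$ equivariant is forced to be $(h\cdot^{(k)}_{\alpha}a)(m):=h(a\bullet^{(j)}_{\alpha}m)$, i.e.\ induced \emph{contravariantly from the left action on $M$ in slot $j$}, not from any action on the codomain $P$. (With the convention $(f\cdot x)(m)=[f(m),x]_{\gamma}$ of \S 4.5, which your phrasing echoes, $\Theta(f)$ would fail to be a right-module map and the adjunction would break.) The same forced choice is what makes your balancing check for $\Xi$ go through, so once you fix the module structure on the internal Hom to this one, the rest of your argument --- mutual inversion on simple tensors, naturality by pre- and post-composition, and the (M2) coherence across bracketings --- is sound.
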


\subsection{Left exactness and $\delta$–functors}
\begin{proposition}[Left exactness]\label{prop:left-exact}
For fixed $M$, the functor $\Hom^{(j,k)}_\Gamma(M,-)$ is left exact; for fixed $N$,
$-\otimes^{(j,k)}_{\Gamma}N$ is right exact. Moreover, the usual long exact sequences
in $\Ext$ and $\Tor$ arise once enough projectives/injectives exist
(Remark~\ref{rem:enough}), exactly as in classical derived functor theory
\cite{Weibel1994}.
\end{proposition}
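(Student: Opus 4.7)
The plan is to deduce both statements from the categorical infrastructure already established in Propositions~\ref{prop:adjoints-biproducts}--\ref{prop:kercoker} and the tensor--Hom adjunction of Proposition~\ref{prop:tensor-hom-adjunction}, and then to obtain the long exact sequences via the standard machinery for Quillen exact categories with enough projectives/injectives (Remark~\ref{rem:enough}).

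First, I would prove left exactness of $\Hom^{(j,k)}_\Gamma(M,-)$ by direct verification. Let $A\rightarrowtail B \twoheadrightarrow C$ be a conflation in ${\nTGMod{T}}^{\mathrm{bi}}$. By Proposition~\ref{prop:kercoker}, this is a short exact sequence of the underlying abelian groups with actions inherited slot-wise. Applying $\Hom^{(j,k)}_\Gamma(M,-)$ I would show: (i) injectivity of the induced map $\Hom(M,A)\to \Hom(M,B)$, which follows because a morphism $M\to A$ that becomes $0$ in $B$ is already $0$ by injectivity of $A\hookrightarrow B$; and (ii) exactness at $\Hom(M,B)$, by checking that any $g:M\to B$ factoring through the kernel of $B\twoheadrightarrow C$ lifts uniquely to a morphism $M\to A$ that respects the positional $T$- and $\Gamma$-actions in slots $j$ and $k$. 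The key point to verify here is that the set-theoretic lift produced in $\Ab$ automatically satisfies axioms (M1)--(M4), which holds because the action on $A$ is the restriction of that on $B$.

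Right exactness of $-\otimes^{(j,k)}_{\Gamma}N$ I would obtain formally from the adjunction of Proposition~\ref{prop:tensor-hom-adjunction}: as a left adjoint to $\Hom^{(j,k)}_\Gamma(-,P)$, the functor $-\otimes^{(j,k)}_\Gamma N$ preserves all small colimits, and in particular cokernels and finite coproducts, which is precisely right exactness in the Quillen sense. Alternatively, and as a sanity check on the positional coequalizer definition, I would verify right exactness directly: given a deflation $A\twoheadrightarrow B$ with kernel $K$, the image of $K\otimes^{(j,k)}_\Gamma N$ in $A\otimes^{(j,k)}_\Gamma N$ is exactly the kernel of the induced map to $B\otimes^{(j,k)}_\Gamma N$, by chasing the coequalizer relations $\Phi\sim\Psi$ through the quotient $A\twoheadrightarrow B$ and using additivity of the action in the module slot (axiom (M1)).

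For the long exact sequences in $\ExtG$ and $\TorG$, I would invoke the exact-category formalism of Bühler~\cite{Buehler2010} following Quillen~\cite{Quillen1973}. Granted enough projectives in ${\nTGMod{T}}^{\mathrm{bi}}$ (supplied by the free modules $F^{\mathrm{L}}(X)$ of Proposition~\ref{prop:adjoints-biproducts} and their bi-module analogues) and enough injectives (from cofree constructions, Remark~\ref{rem:enough}), every object admits projective and injective resolutions in the exact structure. The derived functors $\ExtG^{r}_{(j,k)}(M,-)$ and $\TorG_{r}^{(j,k)}(-,N)$ are then defined as the cohomology/homology of the corresponding Hom- or tensor-complexes, and the horseshoe lemma, applied to a conflation, produces compatible resolutions in each argument. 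The connecting homomorphisms are constructed by the standard snake-lemma argument for exact categories, yielding the long exact sequences.

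The main obstacle I anticipate is not the homological bookkeeping but the positional compatibility: one must check that at each step (lifting morphisms, constructing horseshoe resolutions, defining connecting maps) the slot indices $j$ and $k$ are preserved, so that the relations $\Phi\sim\Psi$ defining $\otimes^{(j,k)}_\Gamma$ and the slot-specific action conditions in $\Hom^{(j,k)}_\Gamma$ remain intact throughout the derivation. This amounts to verifying that the forgetful functor to $\Ab$ reflects exactness and that every construction used (kernels, cokernels, coproducts, resolutions) lifts canonically through this forgetful functor with the positional structure retained, which is exactly what Propositions~\ref{prop:adjoints-biproducts} and~\ref{prop:kercoker} have arranged.
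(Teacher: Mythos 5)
The paper states Proposition~\ref{prop:left-exact} with no proof at all (it is followed immediately by Corollary~\ref{cor:les}), so there is no argument of record to compare yours against. What you have written is a reasonable reconstruction of the standard argument the author presumably intends --- direct verification of left exactness of $\Hom^{(j,k)}_\Gamma(M,-)$ on a conflation, right exactness of the tensor via the adjunction, and the horseshoe lemma plus connecting morphisms for the long exact sequences --- and at that level of generality it is the right plan.

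Two points deserve attention before the proposal can be accepted as a proof. First, the adjunction actually stated in Proposition~\ref{prop:tensor-hom-adjunction} is $\Hom(M\otimes^{(j,k)}_{\Gamma} N,P)\cong\Hom(N,\Hom^{(j,k)}_\Gamma(M,P))$, which exhibits $M\otimes^{(j,k)}_{\Gamma}(-)$ (fixed $M$, varying $N$) as a left adjoint. The proposition you are proving asserts right exactness of $(-)\otimes^{(j,k)}_{\Gamma}N$ with $N$ fixed and $M$ varying --- the other variable --- and your phrase ``left adjoint to $\Hom^{(j,k)}_\Gamma(-,P)$'' does not parse, since that functor is contravariant. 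You need either the mirror adjunction $\Hom(M\otimes N,P)\cong\Hom(M,\Hom(N,P))$, plausible from the symmetry of the coequalizer presentation but not proved anywhere in the paper, or you must lean entirely on your direct coequalizer argument. Second, both your proposal and the paper treat the underlying objects as if they lived in $\Ab$, but Definition~\ref{def:LRBmodules} only requires $(M,+,0)$ to be a commutative monoid. In that setting a quotient is determined by a congruence rather than by a kernel, so the formulation ``image of $K\otimes^{(j,k)}_{\Gamma}N$ equals the kernel of $A\otimes^{(j,k)}_{\Gamma}N\to B\otimes^{(j,k)}_{\Gamma}N$'' is not the correct notion of exactness at the middle term, and the snake-lemma construction of connecting maps cannot proceed by element chases; it needs the axioms of the exact structure, which Proposition~\ref{prop:kercoker} asserts but does not verify. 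Your closing paragraph correctly flags positional compatibility as a thing to check, but the more substantive gap --- inherited from the paper --- is this monoid-versus-group ambiguity, which must be resolved before the $\Ext$ and $\Tor$ long exact sequences can be considered established.
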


\begin{corollary}[Long exact sequences]\label{cor:les}
Given a short exact sequence $0\to A\to B\to C\to 0$ in ${\nTGMod{T}}^{\mathrm{bi}}$
and suitable $M,N$, there are canonical long exact sequences in $\Ext$ and $\Tor$,
as in the standard derived category framework \cite{Neeman2001}.
\end{corollary}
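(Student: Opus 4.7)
The plan is to reduce both long exact sequences to the standard homological principle that a degree-wise split conflation of (co)chain complexes induces a long exact sequence in (co)homology, and to produce such a conflation from the given short exact sequence $0\to A\to B\to C\to 0$. All the ingredients are already in place: the exact structure of Proposition~\ref{prop:kercoker}, the sufficiency of projectives and injectives from Remark~\ref{rem:enough}, the left/right exactness of $\Hom^{(j,k)}_\Gamma(-,-)$ and $-\otimes^{(j,k)}_{\Gamma}-$ from Proposition~\ref{prop:left-exact}, and the biproduct/preadditive structure from Proposition~\ref{prop:adjoints-biproducts}.

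The first step is a positional horseshoe lemma. Given projective resolutions $P_\bullet\twoheadrightarrow A$ and $R_\bullet\twoheadrightarrow C$ in ${\nTGMod{T}}^{\mathrm{bi}}$, I would build a projective resolution $Q_\bullet\twoheadrightarrow B$ with $Q_n = P_n\oplus R_n$ fitting into a degree-wise split conflation
\[
P_\bullet \;\rightarrowtail\; Q_\bullet \;\twoheadrightarrow\; R_\bullet.
\]
The usual inductive recipe applies: lift each $R_n\to C$ through the admissible epimorphism $B\twoheadrightarrow C$ using projectivity of $R_n$, then define the differential of $Q_\bullet$ as the standard $2\times 2$ block combining the lift with the differentials of $P_\bullet$ and $R_\bullet$. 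Because biproducts in ${\nTGMod{T}}^{\mathrm{bi}}$ are computed in $\Ab$ with componentwise positional actions, each $Q_n$ is a genuine bi-$\Gamma$-module, and the sequence is split in every degree.

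Applying an additive functor — namely $-\otimes^{(j,k)}_{\Gamma}N$ for the $\Tor$ statement, and $\Hom^{(j,k)}_\Gamma(-,M)$ for the $\Ext$ statement in the first variable — to $P_\bullet\rightarrowtail Q_\bullet\twoheadrightarrow R_\bullet$ preserves the degree-wise splitting (additive functors preserve biproducts), yielding a conflation of complexes in $\Ab$. The zig-zag lemma then produces the connecting morphisms and the long exact sequences
\[
\cdots \to \Tor_r(A,N)\to \Tor_r(B,N)\to \Tor_r(C,N)\to \Tor_{r-1}(A,N)\to \cdots,
\]
\[
\cdots \to \Ext^r(C,M)\to \Ext^r(B,M)\to \Ext^r(A,M)\to \Ext^{r+1}(C,M)\to \cdots.
\]
The corresponding sequences in the second variable follow by the dual construction using injective coresolutions and a dual horseshoe lemma. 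Naturality of the connecting morphisms, both in the short exact sequence and in $M,N$, is inherited from the uniqueness up to chain homotopy of comparison maps between resolutions, which is standard for projective/injective covers.

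The principal delicacy is not the overall outline but the horseshoe step in the positional setting: the chosen lifts $R_n\to B$ must be simultaneously compatible with the $(j)$-left and $(k)$-right actions, and the differentials of $Q_\bullet$ must respect both slots at once. In the binary commutative case these compatibilities collapse into one, whereas here they are a priori independent. The resolution is that biproducts and cokernels in ${\nTGMod{T}}^{\mathrm{bi}}$ are formed slot-by-slot in $\Ab$ (by Propositions~\ref{prop:adjoints-biproducts} and~\ref{prop:kercoker}), so the lifting problem decouples across the two positional indices $j$ and $k$; the argument goes through, but the slot bookkeeping must be tracked explicitly to confirm that no hidden positional obstruction appears and that axioms (M1)–(M4) are preserved on both sides of $Q_n$.
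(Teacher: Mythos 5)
The paper itself supplies no argument for this corollary beyond deferring to ``classical derived functor theory,'' so your outline --- horseshoe lemma, degreewise split conflation of resolutions, application of an additive functor, zig-zag lemma --- is exactly the argument the paper intends to invoke, and you are right that the positional bookkeeping is a point needing care. However, there is a genuine gap that your write-up (and the paper) passes over: the objects of ${\nTGMod{T}}^{\mathrm{bi}}$ are commutative \emph{monoids}, not abelian groups, and nothing in Definition~\ref{def:LRBmodules} or Proposition~\ref{prop:adjoints-biproducts} provides additive inverses. The ``standard $2\times 2$ block'' differential on $Q_n=P_n\oplus R_n$ squares to zero only because of an alternating sign (one needs a cancellation of the form $d_P\circ\lambda_n+\lambda_{n-1}\circ d_R=0$), and the connecting morphism produced by the zig-zag lemma is defined as a difference of lifts. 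Both steps fail verbatim in a category enriched over commutative monoids. To close the gap you must either (i) justify that the hom-monoids are in fact abelian groups --- which the paper asserts via the word ``preadditive'' but never proves --- or (ii) replace the horseshoe/zig-zag machinery by a subtraction-free substitute (group completion of the module categories, or a theory of extensions adapted to semimodules), and in either case say so explicitly.

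A secondary point: in a Quillen exact category that is not abelian, the horseshoe lemma additionally requires verifying that the induced map $P_0\oplus R_0\to B$ is an \emph{admissible} epimorphism; this follows from the exact-structure axioms (closure of deflations under the relevant compositions and pullbacks, as in B\"uhler's exposition), not merely from ``biproducts are computed componentwise in $\Ab$,'' so that step deserves an explicit line rather than being absorbed into the slot bookkeeping.
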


\subsection{Interface with ideals and spectrum}
The positional indices $(j,k)$ match the left/right/two-sided
$\Gamma$-ideal structure used to define primitive and prime strata of the
non-commutative $\Gamma$-spectrum~\cite{GokavarapuRaoPrime2025}.
Annihilators, radicals, and simplicity criteria transfer functorially.

\subsection{Coherence for the $n$-ary associativity (axiom (M2)) via diagrams}
\label{subsec:coherence}
(Coherence diagrams follow the higher-arity associativity principles in 
$\Gamma$-structures, cf.\ Nobusawa~\cite{Nobusawa1963} and 
derived $n$-ary variants in~\cite{GokavarapuRaoDerived2025}.)

\[ \
\begin{tikzcd}[column sep=large]
T^{n}\times\Gamma^{n-1}\times T^{n-1}\times\Gamma^{n-2}\times M
  \arrow[r, "{\mathrm{id}\times\tilde{\mu}\times\mathrm{id}}"]
  \arrow[d, "{\tilde{\mu}\times\mathrm{id}}"]
&
T^{n}\times\Gamma^{n-1}\times M
  \arrow[d, "{\mu^{(j)}}" ]
\\
T^{n}\times\Gamma^{n-1}\times M
  \arrow[r, "{\mu^{(j)}}"]
&
M
\end{tikzcd}
\]

\subsection{Worked construction of the positional tensor as a coequalizer}
\label{subsec:positional-tensor}
\label{sec:modules}

Throughout let $(T,+,\Gamma,\mu)$ be a (possibly non-commutative) $n$-ary 
$\Gamma$-semiring as in Section~2, following the $\Gamma$-semiring 
framework of Nobusawa~\cite{Nobusawa1963} and Rao~\cite{Rao1995}. 
The categorical methods used here follow the exact-category viewpoint of 
Quillen~\cite{Quillen1973} together with modern expositions such as 
Weibel~\cite{Weibel1994} and Bühler~\cite{Buehler2010}.  
We develop a slot–sensitive module theory compatible with non-commutativity 
and $n>3$, designed to support exact structures and derived functors.

\section{Bi-Modules over Non-Commutative $n$-ary $\Ga$-Semirings}
\label{sec:modules}

In this section we fix the categorical and functorial conventions for left,
right, and bi-$\Ga$-modules over a non-commutative $n$-ary
$\Ga$-semiring. The presentation harmonises the classical
$\Ga$-ring framework of Nobusawa~\cite{Nobusawa1963}, the
$\Ga$-semiring formulation of Rao~\cite{Rao1995}, and the systematic
exposition of Hedayati--Shum~\cite{HedayatiShum2011}. These structural
ingredients form the algebraic base from which the exact category of
Section and the homological constructions of
Part~II are developed.

\subsection{Left, Right, and Bi-$\Ga$-Modules}
\paragraph{Positional convention.}
All actions in this section use the same slot indices $(j,k)$ as in Section~3.
A left module acts in slot $j=2$ and a right module in slot $k=n$, with the
structural operation written as
$[x_1,\ldots,x_n]_{\gamma_1,\ldots,\gamma_{n-1}}$.
All formulas below are interpreted under this positional convention.

Let $T$ be a non-commutative $n$-ary $\Ga$-semiring in the sense of
Section~\ref{sec:preliminaries}. A \emph{left $T$--$\Ga$-module} is a
commutative monoid $(M,+,0)$ equipped with operations
\[
T^{n-1}\times M\times \Ga \longrightarrow M,\qquad
(x_1,\ldots,x_{n-1},m\,;\,\ga)\longmapsto
[x_1,\dots,x_{n-1},m]_{\ga},
\]
which are additive in each slot, $\Ga$-compatible, and satisfy the
$n$-ary associativity inherited from~$T$.

A \emph{right $T$--$\Ga$-module} is defined analogously.  A
\emph{bi-$\Ga$-module} is an additive monoid $M$ which carries both left
and right $T$-actions making the evident compatibility diagrams commute.
Up to routine verifications, these actions extend the binary
$\Ga$-ring actions appearing in Nobusawa's framework.

\begin{definition}
A \emph{$T$--$T$ bi-$\Ga$-module morphism} $f\colon M\to N$ is an
additive map satisfying
\[
f([x_1,\dots,x_{n-1},m]_{\ga})
=
[x_1,\dots,x_{n-1},f(m)]_{\ga},
\qquad
f([m,x_2,\dots,x_{n}]_{\ga})
=
[f(m),x_2,\dots,x_{n}]_{\ga}.
\]
The category of bi-$\Ga$-modules is denoted
$\BiMod_{\Ga}(T)$.
\end{definition}

\subsection{Free and Representable Modules}

For any set $X$, the \emph{free left $T$--$\Ga$-module}
$F(X)$ is the commutative monoid generated by symbols $\langle x\rangle$
($x\in X$) modulo the relations forced by additivity and the
$n$-ary action.  The universal property is standard:
for any left module $(M,+)$ and any map $\varphi\colon X\to M$, there is
a unique morphism
$\widetilde{\varphi}\colon F(X)\to M$ satisfying
$\widetilde{\varphi}(\langle x\rangle)=\varphi(x)$.
The construction extends to right modules and bi-modules.

This admits a representability interpretation:
\[
\Hom_{\BiMod_{\Ga}(T)}(F(X),M)
\;\cong\; \mathrm{Maps}(X,M).
\]
The equivalence is natural in both variables and will be essential  when constructing admissible monomorphisms and
epimorphisms in the sense of Quillen~\cite{Quillen1973}.

\subsection{Biproducts, Kernels, and Cokernels}

Finite direct sums exist in $\BiMod_{\Ga}(T)$: for
bi-modules $M$ and $N$, the biproduct is the product monoid
$M\oplus N$ with the $T$-actions operating componentwise.  This follows
immediately from the $T$-additivity axioms established in
Section~\ref{sec:preliminaries}.

Given a morphism $f\colon M\to N$, the \emph{kernel}
\[
\Ker(f)=\{\,m\in M : f(m)=0\,\}
\]
is a sub-bi-module: closure under insertion into any
$T$-slot follows from the compatibility of $f$ with the $T$-actions.

Similarly, the quotient monoid $N/\Img(f)$ inherits a natural
bi-$\Ga$-module structure:
\[
[x_1,\dots,x_{n-1},n+\Img(f)]_{\ga}
 :=
[x_1,\dots,x_{n-1},n]_{\ga}+\Img(f),
\]
yielding the \emph{cokernel} $\Coker(f) = N/\Img(f)$.  
Thus $\BiMod_{\Ga}(T)$ admits kernels and cokernels and is a
preadditive category in the sense of
Weibel~\cite[§2.2]{Weibel1994}.

\subsection{Tensor Structures}

The $n$-ary operation on $T$ induces a ``positional'' tensor-like
construction on bi-modules.  For left modules
$M_1,\dots,M_{n-1}$ and a right module $N$, define the object
\[
M_1\widehat{\otimes}\cdots\widehat{\otimes}M_{n-1}\widehat{\otimes}N
\]
as the quotient of the free bi-module generated by symbols
$m_1\widehat{\otimes}\cdots\widehat{\otimes}m_{n-1}\widehat{\otimes}n$
subject to the relations forced by multi-additivity and the $T$-actions.
The resulting operation satisfies the universal property for
$n$-multilinear maps.  In the binary case $n=2$, this construction
reduces to the tensor product of $\Ga$-semimodules in the sense of
Hedayati--Shum~\cite{HedayatiShum2011}.

The right-exactness of positional tensoring will be a key ingredient in
constructing projective resolutions  and for
defining $\Tor^{\Ga}$ in Part~II of the series.

\subsection{Internal Hom}

For bi-modules $M$ and $N$, define the \emph{internal Hom}
$\underline{\Hom}_{\Ga}(M,N)$ as the set of all bi-module morphisms
endowed with pointwise addition and with left and right $T$-actions
determined by
\[
(x\cdot f)(m) := f([x,m]_{\ga}),\qquad
(f\cdot x)(m) := [f(m),x]_{\ga},
\]
whenever the slot-positions are determined by context.  One verifies
that this indeed forms a bi-$\Ga$-module.  This construction is dual to
positional tensoring and prepares the ground for the construction of
$\Ext^{\Ga}$ via derived functors in Part~II.

\subsection{Relations to the Exact Structure}

The categorical constructions above are precisely the ingredients needed
to endow $\BiMod_{\Ga}(T)$ with an exact structure in the sense of
Quillen~\cite{Quillen1973}.  In next paper Part-II, we will show
that kernels of admissible epimorphisms and cokernels of admissible
monomorphisms behave stably under insertion into any $T$-slot, thereby
yielding a well-defined collection of conflations.  This stability is
the essential non-commutative analogue of the behaviour of exact
sequences in additive and exact categories as described in
Bühler~\cite{Buehler2010}.  This exact structure will form the backbone
of the homological theory developed in Part~II and the geometric
applications appearing in Part~III.


\section*{Conclusion}

In this first part of our three-paper series, we established the
categorical and homological foundations of non-commutative
$n$-ary $\Gamma$-semirings.  
By constructing the categories of left, right, and bi-$\Gamma$-modules
with fully positional slot-sensitive actions, we showed that the
resulting bi-module category forms a Quillen exact category.  
This provides the correct algebraic environment to define kernels,
cokernels, conflations, projective and injective resolutions, and
the derived functors $\Ext^{(j,k)}_{\Gamma}$ and $\Tor_{(j,k),\Gamma}$.

The theory developed here unifies the structural–spectral results of the
non-commutative $n$-ary framework with the derived $\Gamma$-geometry
established for the commutative ternary case.
In addition, the construction of the positional tensor product and the
internal Hom reveals the precise way in which $n$-ary asymmetry drives
homological behaviour.  
Our analysis culminates in the construction of a triangulated derived
category $\mathbf{D}({\nTGMod{T}}^{\mathrm{bi}})$ that faithfully records
the higher-arity structure, providing the first foundation for
\emph{derived non-commutative $\Gamma$-geometry}.

Parts~II and~III of the series will develop the higher homological
algebra, spectral sequences, Künneth formulas, and geometric 
globalisation on $\Spec^{\mathrm{nc}}_{\Gamma}(T)$.  
Collectively, the three papers establish a unified architecture linking
the algebraic, homological, and geometric theories of 
non-commutative $n$-ary $\Gamma$-semirings.

\section*{Acknowledgements}
The author  gratefully acknowledges  
\textbf{Dr.\ Ramachandra R.\ K.},  
Principal, Government College (Autonomous), Rajahmundry,  
for providing a conducive academic and research environment,  
and for extending continuous encouragement throughout the preparation of
this work.

The author further thanks the Department of Mathematics,
Acharya Nagarjuna University, Guntur,  
for its academic support and research infrastructure.

\section*{Funding}

The author declares that no external funding, grants, or financial
support were received for the research, authorship, or preparation of
this manuscript.

\section*{Ethical Approval}

This manuscript does not involve human subjects, animal experiments,
biological data, or any procedure requiring ethical clearance.  
Hence no ethical approval is applicable.

\section*{Author Contributions}

C.\ Gokavarapu formulated the research problem, developed the algebraic
framework, formalised the exact-categorical and homological structures,
performed all computations, and prepared the full manuscript.  
All mathematical results, proofs, and constructions were carried out
solely by the author.

\section*{Conflict of Interest}

The author declares that there is \emph{no conflict of interest} of any
kind, financial or otherwise.

\label{'ubl'}

\end{document}